\long\def\remove#1{}
\newtheorem{theorem}{Theorem}[section] 
\newtheorem{lemma}[theorem]{Lemma}
\newtheorem{obs}[theorem]{Observation}
\newtheorem{proposition}[theorem]{Proposition}
\newenvironment{proof}{{\em Proof:}}{\hfill{\hfill\rule{2mm}{2mm}}}
\newcommand {\mm}[1] {\ifmmode{#1}\else{\mbox{\(#1\)}}\fi}
\newcommand{\cancel}[1]
\begin{document}

\title {Note on the conjugacy classes of elements and their centralizers  for the free product of two groups }

\author{
Dan Burghelea  \thanks{
Department of Mathematics,
The Ohio State University, Columbus, OH 43210,USA.
Email: {\tt burghele@math.ohio-state.edu}}
}
\date{}

\date{}
\maketitle

\begin{abstract}
We describe the conjugacy classes of the elements of the free product  of two groups and their centralizers  and, as a consequence, we correct the calculation of the cyclic and periodic cyclic homology  of the group ring of the free product of two groups given in a previous paper. 
\end{abstract}

\setcounter{tocdepth}{1}
\section {Introduction}

This Note was prompted  by a mistake pointed out  by Markus Land about the cyclic and periodic cyclic homology of the group ring of the free product, precisely 
 Propositions II and II{p}  in the paper {\bf The cyclic homology of the group rings } published in Comment. Math. Helv., 60 (1985)  no 3, 354-365 .
 The mistake was the result  of a miscalculation of the centralizers of the conjugacy classes of elements of the free product of two groups. 
 In this note we provide a correct description of them and, as a consequence, correct the statements of Proposition II and IIp in \cite{B}.   
 
In consistency with the notation in \cite {B} for a group $G$ and an element $x\in G$ one denotes by $G_x:= \{y\in G \mid  y\cdot x= x\cdot y\}$ the centralizer of $x,$ 
 by $\{x\}$ the subgroup 
 generated by the element $x$ and by $N_x$ the quotient group $N_x:= G_x / \{x\}.$  These groups remain isomorphic for all $x$ in the same conjugacy class. 
 Denote by  $\langle G \rangle$ the set of conjugacy classes of elements of  $G$ and for  $x\in G$ write $\hat x$ for the conjugacy class of $x$.
For the  groups $H$ and $G$ one denotes the nonzero elements  by $h$ and $g$  and the  
neuter elements by $e_H$ and $e_G.$ 

Consider the free product $P= H\ast G.$ 
Any element of $x\in P$ is representable  (not uniquely) by a {\it word}  $s_1 s_2 \cdots s_r$ with $s_i\in H\sqcup G.$  The product of the elements $x$ and $x',$ represented by the words $s_1 s_2\cdots s_r$ and  $s'_1 s'_2 \cdots s'_{r'}$ 
is representable by the concatenation $s_1 s_2 \cdots s_r s'_1 s'_2 \cdots s'_{r'}.$ 
Modifying a word representation  of an element $x\in P$ by

i)  removing all elements $s_i$ of the form $e_H$ and $e_G,$ 

ii)  replacing consecutive elements $\cdots s_i s_{i+1}\cdots$ by their product when in either $H$ or $G,$ 

\noindent leads to  a smaller word representations of $x\in P,$ the {\it reduced word }representation, which is unique.  (The empty word is the reduced representation of $e_P$). 

The reduced word representation   $s_1 s_2\cdots s_r$  for the element   $x \in P$ is characterized by 

a)   $s_i\in (H\setminus e_H) \sqcup (G\setminus e_G)$

b)    consecutive $ s_i $ and $s_{i+1}$ belong to different groups.

Consequently,  a  nontrivial  elements of $x\in P$ has  an unique {\it reduced word} representation  of one of the  following  seven types: 

Type 1:  $w=\ \  h_1 g_1 h_2  g_2 \cdots h_k g_k,$ \ \ \ \  $k\geq 1,$ 

Type 2:  $w=\ \  h_1 g_1  h_2 g_2 \cdots h_k g_k \ h,$ \ \  $k\geq 1,$ 

Type 3:  $w= g \ h_1 g_1  h_2  g_2 \cdots h_k  g_k,$ \ \ \ \ $k\geq 1,$ 

Type 4:  $w= g \ h_1 g_1 h_2  g_2 \cdots h_k  g_k\  h,$ \ \  $k\geq 1,$ 

Type 5:    $w= g,$

Type 6:    $w= h,$

Type 7:    $w= g h.$
\vskip .1in

Because of the unicity of the reduced word representation the following facts hold true:
\begin{enumerate} 
\item  if two nontrivial elements commute then they belong  to the same type;
as a consequence  if $x$ is represented by a reduced word of type 5 or type 6 then $P_x= G_x$ or $P_x= H_x.$

\item  any nontrivial element $x\in P$ of type different from type 5 and type 6  is conjugate to an element of type 1. 
\item the following proposition holds true
\begin {proposition}\label {P1}
Suppose   $w'= h_{1}\  g_1\  h_2 \  g_2 \cdots h_r \  g_r$ and $w'= h'_1\  g'_1\  h'_2 \  g'_2 \cdots h'_r\   g'_{r'}$ are two reduced words of type 1, representing elements $x$ and $x'$ in $P$ s.t. $x\cdot x'= x'\cdot x$  and suppose $c$ is the greatest common divisor of $r$ and $r'.$ 
Then there exists a reduced word of type 1, $w_0= h_{1}''  \  g_1''\   h_2 ''\  g_2'' \cdots h_c''\   g_c'',$ such that $w$ is the concatenation of $r/c$ copies of $w_0$ and $w'$ is the concatenation of $r'/c$ copies of $w_0.$
\end{proposition}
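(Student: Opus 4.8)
The plan is to convert the group-theoretic relation $x\cdot x'=x'\cdot x$ into a purely combinatorial identity between the two letter-strings $w$ and $w'$, and then to run the classical Euclidean (Lyndon--Schützenberger) argument that two commuting strings in a free monoid are common powers of a single string. The whole point of working with type-$1$ words is that concatenation of such words produces \emph{no} cancellation, so that uniqueness of the reduced word lets us pass freely between elements of $P$ and their letter-strings.

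First I would make the reduction precise. Since $w$ and $w'$ are both of type $1$, each begins with a letter of $H$ and ends with a letter of $G$. Hence in the concatenation $w\,w'$ the junction places a $G$-letter immediately before an $H$-letter, so neither reduction (i) (deletion of a neutral element) nor reduction (ii) (merging of two consecutive letters of the same factor) applies; the same is true of $w'\,w$. Therefore $w\,w'$ and $w'\,w$ are already reduced words, representing $x\cdot x'$ and $x'\cdot x$ respectively. By the uniqueness of the reduced representation, the hypothesis $x\cdot x'=x'\cdot x$ forces the equality of strings $w\,w'=w'\,w$, meaning the two sequences of letters agree term by term.

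Next I would argue by induction on $r+r'$, assuming without loss of generality $r\le r'$. If $r=r'$, comparing the two strings $w\,w'=w'\,w$ of equal length immediately gives $w=w'$, and one takes $w_0=w$ (so $c=r=r'$). If $r<r'$, comparing the first $2r$ letters of $w\,w'=w'\,w$ shows that $w$ is a prefix of $w'$, say $w'=w\,w''$ where $w''$ has length $2(r'-r)$; since $w''$ begins at the odd position $2r+1$ of $w'$, it is again a genuine type-$1$ word with parameter $r'-r\ge 1$. Substituting $w'=w\,w''$ into $w\,w'=w'\,w$ gives $w\,w\,w''=w\,w''\,w$, and left-cancelling the common prefix $w$ yields $w\,w''=w''\,w$. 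Thus $w$ and $w''$ are commuting type-$1$ words with parameters $r$ and $r'-r$, and $r+(r'-r)=r'<r+r'$, so the inductive hypothesis furnishes a type-$1$ word $w_0$ of length $2\gcd(r,r'-r)=2\gcd(r,r')=2c$ with $w=w_0^{\,r/c}$ and $w''=w_0^{\,(r'-r)/c}$. Then $w'=w\,w''=w_0^{\,r/c}\,w_0^{\,(r'-r)/c}=w_0^{\,r'/c}$, which closes the induction.

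The step I expect to require the most care is not the algebra but the bookkeeping of the alternating $H$--$G$ pattern. I must check at each stage that the prefix/suffix split off has \emph{even} length, so that $w''$ is once more a bona fide type-$1$ word and so that the factor $w_0$ delivered by the induction has length exactly $2c$; this is precisely the feature that lets the Euclidean descent stay inside the class of type-$1$ words rather than producing a common factor of some inadmissible odd length. Once this parity is tracked, the identity $\gcd(r,r'-r)=\gcd(r,r')$ and the length count $|w_0|=2c$ follow routinely, completing the proof.
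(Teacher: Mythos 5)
Your proof is correct, and it shares with the paper only the first (essential) reduction: since type-1 words begin with an $H$-letter and end with a $G$-letter, the concatenations $w\,w'$ and $w'\,w$ are already reduced, so uniqueness of reduced words turns $x\cdot x'=x'\cdot x$ into the string identity $w\,w'=w'\,w$. From there the two arguments diverge. The paper packages each pair $h_i g_i$ into a single symbol of $\mathbb S=(H\setminus e_H)\times(G\setminus e_G)$ --- which disposes of the parity bookkeeping you rightly flag as the delicate point --- and then proves a standalone periodicity lemma (Lemma~\ref{L1}, a weak Fine--Wilf statement): a word satisfying $s_i=s_{d+i}$ and $s_i=s_{i+p}$ is a power of its prefix of length $\gcd(n,p)$, established by partitioning the index set into the residue classes $\mathcal S(r)$ modulo $d$ and showing the two shift relations permute these classes transitively when $\gcd(d,p)=1$. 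You instead run the Lyndon--Sch\"utzenberger Euclidean descent: from $w\,w'=w'\,w$ with $r<r'$ extract $w'=w\,w''$, left-cancel to get $w\,w''=w''\,w$, and induct on $r+r'$ using $\gcd(r,r'-r)=\gcd(r,r')$; your check that $w''$ starts at the odd position $2r+1$ and has even length is exactly what keeps the descent inside the class of type-1 words, and the induction is well-founded since the base case $r=r'$ gives $w=w'$ directly. Both proofs are complete; yours is shorter and avoids the somewhat intricate orbit-chasing in the paper's proof of Lemma~\ref{L1}, while the paper's route isolates a reusable combinatorial lemma about periodic symbol sequences and, by its choice of alphabet, never has to mention parity at all.
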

\end{enumerate}

\begin{obs}\label {O2}\

Items 1 and 2  above imply that $\langle P\rangle =e_P \sqcup (\langle H \rangle \setminus e_H)\sqcup (\langle G\rangle \setminus e_G)\sqcup U$ with
$U= \{ \hat x\in \langle P \rangle \mid \hat x\cap (e_H \ast G) = \emptyset, \hat x \cap (H\ast e_G)=  \emptyset\}$ and the centralizers in $P$ of the elements in $e_H\ast G\subset P$ resp. in $H\ast e_G\subset P$ remain the same as the centralizers in $G$ and $H$ resp..  

Item 3 ( i.e. Proposition (\ref{P1})  shows that for $x\in \hat x\in U, $  the pair {\it group-subgroup}  $(P_x, \{x\})$ is isomorphic to the pair $(\mathbb Z, k(x)\mathbb Z),$ 
hence  $N_x \simeq \mathbb Z_{k(x)}.$ 
Here $k(x)$ is the largest integer $k$ s.t. $x= y^k.$ 
\end{obs}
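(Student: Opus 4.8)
The plan is to prove the two assertions separately: the partition of $\langle P\rangle$ together with the stability of centralizers of factor elements from items~1 and~2, and the description of $(P_x,\{x\})$ for $\hat x\in U$ from item~1 and Proposition~\ref{P1}. Throughout I identify $e_H\ast G$ and $H\ast e_G$ with the canonical copies of $G$ and $H$ inside $P$.

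\emph{The partition.} By definition $U$ consists of the classes meeting neither copy of $G$ nor copy of $H$, so I must show that the remaining nontrivial classes are accounted for, without overlap, by $\langle G\rangle\setminus e_G$ and $\langle H\rangle\setminus e_H$. First, the map sending the $G$-class of $g\neq e_G$ to its $P$-class is a bijection onto the classes meeting $G\setminus e_G$: surjectivity is immediate, and injectivity (two elements of $G$ that are $P$-conjugate are already $G$-conjugate) follows from the uniqueness of the reduced word, since a factor element is cyclically reduced of syllable length $1$ and a conjugator realizing minimal length can be taken inside the factor. The same uniqueness shows that a factor element is $P$-conjugate neither to an element of the other factor nor to a type-$1$ element (cyclically reduced of even length $\ge 2$), giving disjointness of the three pieces. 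The four sets then exhaust $\langle P\rangle$: a nontrivial class either meets a factor, or all its reduced representatives have type $1,2,3,4$ or $7$; by item~2 it then contains a type-$1$ element, and by the disjointness just shown it meets no factor, so it lies in $U$. Finally the stability of centralizers for factor elements is exactly the consequence recorded in item~1: for $x$ of type~$5$ or~$6$, anything commuting with $x$ has the same type and hence lies in the same factor, so $P_x=G_x$ or $P_x=H_x$.

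\emph{Reduction and primitive root for $U$.} Fix $x$ with $\hat x\in U$. Conjugation by $z\in P$ is an automorphism carrying $(P_x,\{x\})$ isomorphically onto $(P_{zxz^{-1}},\{zxz^{-1}\})$ and leaving $k(x)$ unchanged, so by item~2 I may take $x=h_1g_1\cdots h_kg_k$ of type~$1$. A power $w_0^{\,a}$ of a type-$1$ word $w_0=h_1g_1\cdots h_cg_c$ is again reduced, the only new junctions $g_c\,h_1$ joining different factors; hence type-$1$ elements have infinite order and $a\mapsto w_0^{\,a}$ is injective. Let $w_0$ be a type-$1$ word with the fewest syllables among those of which $x$ is a positive power; minimality makes $w_0$ primitive (not a proper power), and $x=w_0^{\,k}$ for some $k\ge 1$.

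\emph{The centralizer.} I claim $P_x=\{w_0\}\cong\mathbb Z$. The inclusion $\{w_0\}\subseteq P_x$ is clear. For the reverse, let $y\in P_x$ be nontrivial; then $y$ cannot lie in a factor, for otherwise its centralizer would, by item~1, lie in that factor and yet contain the type-$1$ element $x$. Thus, after replacing $y$ by its inverse if necessary, $y$ is of type~$1$, and since $xy=yx$ Proposition~\ref{P1} produces a type-$1$ word $w_1$ of which both $x$ and $y$ are powers. As $w_0$ is the primitive type-$1$ root of $x=w_0^{\,k}$, the word $w_1$ must be a power of $w_0$, whence $y\in\{w_0\}$. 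Hence $P_x=\{w_0\}$ is infinite cyclic. Applying the same argument to any $y$ with $y^{m}=x$ shows $y\in\{w_0\}$, forcing $m\mid k$, so $k=k(x)$. Under the isomorphism $\{w_0\}\cong\mathbb Z$ with $w_0\mapsto 1$, the subgroup $\{x\}=\{w_0^{\,k}\}$ corresponds to $k(x)\mathbb Z$; therefore $(P_x,\{x\})\cong(\mathbb Z,k(x)\mathbb Z)$ and $N_x=P_x/\{x\}\cong\mathbb Z_{k(x)}$.

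I expect the main obstacle to be the combinatorial fact that the primitive root $w_0$ is unique, i.e. that every type-$1$ word of which $x$ is a power is itself a power of $w_0$; this periodicity statement, feeding both $P_x=\{w_0\}$ and the identity $k=k(x)$, is where Proposition~\ref{P1} does its real work and must be combined with the uniqueness of the reduced word. A secondary delicate point is the claim that a nontrivial commuting partner $y$ of the cyclically reduced $x$ is, up to inversion, again of type~$1$ (so that Proposition~\ref{P1} applies): this rules out the a priori possibilities of odd syllable length (types~$2$,~$3$) and of non-cyclically-reduced forms, and is precisely where one must push the uniqueness of the reduced word a little beyond the literal content of items~1 and~2.
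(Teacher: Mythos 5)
Your overall route is the same as the paper's (the paper itself proves Observation~\ref{O2} only by citing items 1--2 for the partition and Proposition~\ref{P1} for the centralizer): conjugate $x$ into type 1, show commuting partners are type 1 up to inversion, and use Proposition~\ref{P1} to generate a common root. But there is a genuine gap at the decisive step: ``As $w_0$ is the primitive type-1 root of $x=w_0^{\,k}$, the word $w_1$ must be a power of $w_0$.'' Primitivity of $w_0$ (minimal syllable length among type-1 roots of $x$) does not by itself yield this: in a general group an element can have roots generating non-commuting subgroups, and here you cannot simply apply Proposition~\ref{P1} to the pair $(w_0,w_1)$, because the proposition requires the two elements to \emph{commute}, which you do not yet know --- knowing that each commutes with $x$ would suffice only if $P_x$ were already known abelian, which is exactly what is being proved. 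You correctly flag this as the ``main obstacle,'' but the proposal as written merely asserts it, and everything downstream ($P_x=\{w_0\}$, $m\mid k$, $k=k(x)$, and $\{x\}\leftrightarrow k(x)\mathbb{Z}$) rests on it.

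The step can be closed with the paper's own tools, and it is worth seeing how. Proposition~\ref{P1} in fact produces the common root as the \emph{initial segment} ($w$ is the concatenation of copies of $w_0$, so $w_0$ consists of the first $c$ pairs of $w$). Hence both $w_0$ (of pair-length $c_0$) and $w_1$ (of pair-length $c_1$, with $c_0\le c_1$ by minimality) are initial segments of $x$, and each gives a period of the length-$r$ sequence of pairs of $x$. If $c_1=r$ then $w_1=x\in\{w_0\}$ and you are done; otherwise $c_0\le c_1\le r/2$, so $c_0+c_1\le r$, and then both concatenations $w_0w_1$ and $w_1w_0$ coincide with the initial segment of length $c_0+c_1$ of $x$ --- so $w_0$ and $w_1$ \emph{do} commute, Proposition~\ref{P1} applies to them, and it yields a common root of pair-length $\gcd(c_0,c_1)$; minimality of $c_0$ forces $\gcd(c_0,c_1)=c_0$, i.e.\ $c_0\mid c_1$ and $w_1=w_0^{\,c_1/c_0}$. (Equivalently, one can rerun the periodicity argument of Lemma~\ref{L1}, Fine--Wilf style, with the two periods $c_0,c_1$.) A secondary, smaller gap: your claim that a nontrivial $y\in P_x$ is of type 1 up to inversion is only sketched; it needs the syllable-count comparison of the reduced forms of $xy$ and $yx$ (no cancellation at one junction, merging or cancellation at the other) to exclude types 2 and 3, with types 5 and 6 excluded by the factor-centralizer fact as you say. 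Carrying this out is not optional, since item 1 of the paper read literally (``commuting elements have the same type'') is too strong --- $x^{-1}$ commutes with $x$ yet has type 4 or 7 --- so your ``up to inversion'' formulation is the correct one and must be proved rather than quoted.
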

\section {Proof of Proposition \ref{P1}}

Let $\mathbb S$ be a set of symbols.  Let 
$\mathcal S := \{s_1, s_2, \cdots, s_r\cdots,s_n\}$  be an  ordered set of symbols with $s_i\in \mathbb S$ (i.e. an word with letters in $\mathbb S$), 
 $p<n$ and $d=n-p.$ 

\begin{lemma} \label {L1}
Suppose that the collection  $\mathcal S$ satisfies: 
\begin{enumerate}  [label =(\roman*)]
\item $s_i= s_{d+i}$\  for $i\leq p,$
\item $s_i= s_{i+p}$ \ for $i\leq d.$ 
\end{enumerate}

1. If   $n$ and $p$ are relatively prime  
then all $s_i$ are equal.

2. If $c$ is the greatest common divisor  of $n$ and $p$ and $\mathcal S'= s_1, s_2, \cdots s_c$ is  the ordered set of the first $c$ symbols of $\mathcal S$ 
  then $\mathcal S$ is the concatenation of $n/c$ copies of $\mathcal S'.$ 
\end{lemma}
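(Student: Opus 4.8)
The plan is to recast the two hypotheses as a single statement of \emph{cyclic} periodicity and then read off both conclusions from the orbit structure of one cyclic shift. I would identify the index set $\{1,2,\dots,n\}$ with $\mathbb{Z}/n\mathbb{Z}$ (taking $1,\dots,n$ as representatives, so that $n\equiv 0$) and let $\sigma\colon \mathbb{Z}/n\mathbb{Z}\to\mathbb{Z}/n\mathbb{Z}$ be the shift $\sigma(i)=i+p \pmod n$. The central claim to establish is that $s_i=s_{\sigma(i)}$ for every $i$; once this is known, the lemma follows from elementary facts about the cyclic subgroup generated by $\sigma$.

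First I would verify the central claim by splitting into two ranges according to whether the shift by $p$ wraps around. For $1\leq i\leq d$ one has $i+p\leq n$, so $\sigma(i)=i+p$ literally, and hypothesis (ii) gives $s_i=s_{i+p}$ directly. For $d<i\leq n$ the shift wraps, and here the key is the relation $n=p+d$: setting $j=i-d$, so that $1\leq j\leq p$, hypothesis (i) gives $s_j=s_{j+d}=s_i$, while $\sigma(i)=i+p-n=i-d=j$; hence $s_i=s_j=s_{\sigma(i)}$. This wrap-around case is the crux of the argument and the only place where $d=n-p$ is used essentially — it is exactly what makes the two prescribed periods $p$ and $d$ fit together into a single cyclic one.

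With $s_i=s_{\sigma(i)}$ in hand, the symbol $s_i$ is constant along each orbit of $\langle\sigma\rangle$ acting on $\mathbb{Z}/n\mathbb{Z}$. Since the subgroup of $\mathbb{Z}/n\mathbb{Z}$ generated by $p$ is generated by $c=\gcd(n,p)$, the orbit of $i$ is precisely its residue class modulo $c$, so $s_i=s_{i'}$ whenever $i\equiv i' \pmod c$. For Part 1, $\gcd(n,p)=1$ forces $c=1$, a single orbit, whence all $s_i$ coincide. For Part 2, since $c\mid n$ the word splits into $n/c$ consecutive blocks of length $c$, and the congruence condition says each block equals $\mathcal S'=s_1,\dots,s_c$, i.e.\ $\mathcal S$ is the concatenation of $n/c$ copies of $\mathcal S'$; Part 1 is then just the case $c=1$, so a single argument handles both.

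I would remark that this is the special case of the Fine--Wilf theorem in which the two periods $p$ and $d$ sum exactly to the word length $n$, which is what trivializes the usual overlap hypothesis. The only obstacle worth flagging is the bookkeeping of the wrap-around indices $d<i\leq n$; beyond checking that step carefully there are no genuine difficulties, since the orbit count and block decomposition are standard once cyclic invariance is established.
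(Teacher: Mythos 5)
Your proof is correct, and it takes a genuinely different route from the paper's. Your key step --- checking that $s_i=s_{\sigma(i)}$ for the cyclic shift $\sigma(i)=i+p \pmod n$, with hypothesis (ii) covering $1\le i\le d$ and hypothesis (i), applied at $j=i-d$, covering the wrap-around range $d<i\le n$ --- is sound, and from there both parts drop out of the single fact that $\langle p\rangle=\langle \gcd(n,p)\rangle$ in $\mathbb Z/n\mathbb Z$, so that $s_i$ depends only on $i\bmod c$. The paper never passes to $\mathbb Z/n\mathbb Z$: it partitions the indices into the $d$ arithmetic progressions $\mathcal S(r)=\{s_r,s_{r+d},s_{r+2d},\dots\}$, uses (i) to make each progression constant, uses (ii) to link $\mathcal S(r)$ to $\mathcal S(r+k_rd-p)$, and then runs an explicit induction on pairs $(r_i,\kappa_i)$ to show the resulting chain visits all $d$ progressions when $\gcd(p,d)=1$; Part 2 is afterwards reduced to Part 1 by re-reading $\mathcal S$ as a word of length $n/c$ over the alphabet of $c$-blocks. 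In effect the paper studies the induced $p$-shift on the $d$ progression classes (i.e.\ works modulo $d$), while you study the $p$-shift on the $n$ indices directly (i.e.\ work modulo $n$). Your version buys brevity and eliminates two pieces of bookkeeping: the chain $(r_i,\kappa_i)$ with its permutation argument, and the blocking trick for Part 2 (for which the paper does not actually verify that the blocked word again satisfies (i) and (ii) with parameters $n/c$ and $p/c$). Your closing identification of the statement as the boundary case $p+d=n$ of the Fine--Wilf periodicity theorem is also accurate.
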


\begin{proof}
For any  
 $r=1,2,\cdots,  d$ \  define the subsets of $\mathcal S(r)\subset \mathcal S,$
consisting of all elements of $\mathcal S$ indexed by  $r+k d $ for $k=0,1, \cdots,$ namely 
$$\mathcal S(r):= \{ s_r, s_{r+d}, s_{r+2d}, \cdots  s_{r+kd}\cdots \}. $$ 
Note that the sets $\mathcal S(r)$ are disjoint and their union is $\mathcal S.$

\medskip 

Proof of item 1:
For each $\mathcal S(r)$ let $k_r$ be the unique integer such that 
$r+(k_r-1)d\leq p < r+k_r d.$  The first inequality guaranties that 
\begin{equation}\label{e1}
r+ k_r d-p \leq d.
\end{equation}
In view of the hypothesis (i)
all  elements of  $\mathcal S(r)$  are equal and in view of the hypothesis (ii)
the elements of the collections 
$S(r)$ and  $\mathcal  S(r+ k_r d-p)$  are equal; 

Consider the pairs of integers  $(r_i, \kappa _i)$ with $1\leq i \leq d$ defined inductively by: 

a) $r_1=1$ and $\kappa_1:= k_1$ 

b) $r_{i+1}= 1+(\kappa_1 + \kappa_2 +\cdots \kappa_i) d-  i p$  and $\kappa_{i+1}= k_{ r_{i+1}}$ 

\noindent  Note that in view of inequality (\ref{e1}) all $r_i \leq d.$ 

Let $\mathcal S_i= :\mathcal S(r_i).$ Since $p$ and $d$ are relatively prime $\mathcal S_i$ and $\mathcal S_j$ for $i\ne j$ can never be the same since $d$ can not divide $i-j.$  Then  the sets $\mathcal S_i, i=1,2, \cdots d$ 
provide a permutation of the sets $\mathcal S(r), r=1,2, \cdots d.$  Hypothesis (ii) implies that the elements of $\mathcal S_i$ and $\mathcal S_{i+1}$ are equal for any $i$ hence all elements of $\mathcal S$ are equal.
\medskip 

Proof of item 2:  
Consider the set of symbols $\mathbb S'= \mathbb S\times \mathbb S\times \cdots \mathbb S,$  the $c-$ fold  cartesian product of $\mathbb S;$ clearly $\mathcal S' \in \mathbb S'.$  Interpret $\mathcal S$ as an ordered set of $n/c$ symbols of  $\mathbb S'.$  Clearly item 1. implies item 2.

\end{proof}
 
To prove Proposition \ref{P1}  consider the set  of symbols $\mathbb S= (H\setminus e_H) \times (G\setminus e_G)$ and write $w= s_1 \ s_2 \ \cdots \ s_n,$ and $w'= s'_1 \ s'_2 \ \cdots \ s'_{p},$ 
Since the concatenation $w w'$ and $w' w$ are the same one has
 \begin {enumerate} 
 \item $s'_i= s_i$ for $i\leq p$
\item $s'_i= s_{d+i}$ for $i\leq p$
\item $s_i= s_{i+p}$ for $i\leq n-p=d$
\end{enumerate}
which implies 

$s_i= s_{d+i}$  for all $i\leq p$ and

$s_i = s_{p+i}$ for $i\leq d,$

\noindent which in view  of Lemma (\ref{L1}) implies that $w$ is the concatenation exactly $n/c$ times of $w_0=s_1 s_2\cdots, s_c$ and in view of  the equality $s_i= s'_i$ for $i\leq d,$ $w'$ is the concatenation of exactly $p/c$ copies of $w_0.$ 

\section {Cyclic resp. periodic cyclic homology of the group-ring $R[P],$}

Let $R$ be a commutative ring with unit and $G$ a group. 
Recall that the reduced cyclic  resp. periodic cyclic homology, $\tilde HC_\ast(R[G])$ resp. $\tilde {PHC}_\ast(R[G]),$ of the group ring $R[G]$ 
is the co-kernel  of the split injective map $HC_\ast (R[e_g])= HC_\ast (R) \to HC_\ast(R[G])$ resp. 
$PHC_\ast (R[e_G])= PHC_\ast (R) \to PHC_\ast(R[G])$  induced by the inclusion of the trivial subgroup $e_G$ to $G.$ One refers to the cyclic  resp. periodic cyclic homology of the group ring $R[G]$ as the {\it unreduced} version of these homologies. Clearly,  the unreduced version  is the direct sum of the reduced version with one copy of the cyclic resp. periodic cyclic homology of $R.$

As shown in \cite{B} 
all these homologies, reduced or unreduced, say $\mathcal H_\ast(R[G]),$ are graded $R-$modules which are  direct sums of graded $R-$modules $\mathcal H_\ast(R[G])_{\hat x}$ indexed by the conjugacy classes $\hat x\in \langle G \rangle,$
referred to as the contribution of $\hat x,$ $$\mathcal H_\ast(R[G])= \oplus_{\hat x\in \langle G\rangle} \mathcal H(R[G])_{\hat x}.$$  

For each $\hat x\ne e_G$ the contribution to the reduced and unreduced version are the same but for $e_G$  the  unreduced version is equal to the reduced version direct sum   $\mathcal H_\ast (R).$  

For each conjugacy class $\hat x$  one defines $n(\hat x):= n(x)$ the order of the element $x$  and $k (\hat x):= \kappa(x)$ the largest $k$ s.t. $x= y^k$; clearly $n(x)$ and $\kappa(x)$ are the same for all $x$ in the same conjugacy class. 

Recall from \cite {B} the notations:
\begin{enumerate}
\item 
$$ K_\ast (R[G]):= \begin{cases}  \oplus_{n \geq 0}H_{2n} (BG; R) \ \rm {if} \  \ast=\rm {even}\\ \oplus _{n\geq 0} H_{2n+1}(BG; R)\ \rm {if} \  \ast= \rm {odd} \end{cases}\ \   \tilde K_\ast (R[G]):= \begin{cases}  \oplus_{n > 0}H_{2n} (BG; R) \ \rm {if} \  \ast=\rm {even}  \\ \oplus _{n\geq 0} H_{2n+1}(BG; R)\ \rm {if} \  \ast=\rm {odd} \end{cases}                   $$

\item 
for $x \in G$  with  $n(x)=\infty$  
$$ T_\ast (\hat x; R)= T_\ast (x; R):= \lim ( \xymatrix { \cdots \ar[r] & H_{\ast +2n} (BN_x: r)\ar[r] ^S & H_{\ast+ 2n -2} (BN_x; R) \ar[r] & \cdots}$$
\end{enumerate}
 with $S$ the Gysin homomorphism of the fibration $B\{x\}= S^1 \to BG_x \to  BN_x$ where $N_x= G_x/ \{x\},$ which up to isomorphism depends only on the conjugacy class of $x$ and then denoted by $T(\hat x; R).$

Recall from \cite {B} that the 
 contribution of  $\hat x$ when  $0\ne n(\hat x) <\infty$ is 
$$HC_\ast (R[G])_{\hat x}= H_\ast ( B(N_{\hat x})\times BS^1\times K(\mathbb Z_{n(\hat x)}, 1); R), \quad  PHC_\ast (R[G])_{\hat x}= K_\ast(R[ N_{\hat x}])  $$  and  when  $n(\hat x) =\infty$ is 
$$HC_\ast (R[G])_{\hat x}= H_\ast ( B(N_x) ; R), \quad  PHC_\ast (R[G])_{\hat x}= T_{\ast} (\hat x; R)$$  while for  $n(\hat x)=0,$ hence $\hat x= e_G,$  is   
$$HC_\ast (R[G])_{e_G}= H_\ast ( B(G)\times BS^1; R),  \quad    PHC_\ast (R[G])_{e_G}= K_\ast (R[G])$$  and  $$\tilde  HC_\ast (R[G])_{e_G}= H_\ast ( B(G)\times BS^1/ \ast\times BS^1; R).\quad   \tilde {PHC}_\ast (R[G])_{e_G}= \tilde K_\ast (R[G])$$ 
In particular one has 

\begin {proposition} \label {P3} \  

$$\tilde {PHC}_\ast (R[G])= \tilde K_\ast (R[G])  \bigoplus ( \oplus _{\hat x\in (\langle G \rangle' \setminus {e_G})}K_\ast(BN_{\hat x}; R)) \bigoplus (\oplus_{\hat x\in \langle G\rangle''} T_\ast (\hat x; R))$$

where $\langle G\rangle ':= \{\hat x\in \langle G\rangle \mid n(\hat x)<\infty\}$ and $\langle G\rangle ''= \{\hat x\in \langle G\rangle \mid n(\hat x)= \infty\}.$
\end{proposition}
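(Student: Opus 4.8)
The plan is to obtain the statement directly from the conjugacy-class decomposition of $PHC_\ast(R[G])$ together with the explicit per-class formulas recalled immediately above; no further geometric input is needed, so the proof is essentially an assembly and reindexing of pieces already in hand. First I would invoke the decomposition $\tilde{PHC}_\ast(R[G]) = \bigoplus_{\hat x \in \langle G\rangle} \tilde{PHC}_\ast(R[G])_{\hat x}$ and use the fact, recorded above, that for every $\hat x \ne e_G$ the reduced contribution $\tilde{PHC}_\ast(R[G])_{\hat x}$ coincides with the unreduced one $PHC_\ast(R[G])_{\hat x}$, while the contribution of the identity class is the reduced term $\tilde{PHC}_\ast(R[G])_{e_G} = \tilde K_\ast(R[G])$. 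This already isolates the first summand of the claimed formula.

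Next I would partition the remaining index set $\langle G\rangle \setminus \{e_G\}$ according to the order of a representative. Since by the stated convention $n(e_G) = 0 < \infty$, the identity class lies in $\langle G\rangle'$, which is exactly why the formula subtracts it explicitly and sums over $\langle G\rangle' \setminus \{e_G\}$; the infinite-order classes form $\langle G\rangle''$, and together these two sets partition $\langle G\rangle \setminus \{e_G\}$. For each $\hat x \in \langle G\rangle' \setminus \{e_G\}$, so that $0 \ne n(\hat x) < \infty$, I substitute the recalled value $PHC_\ast(R[G])_{\hat x} = K_\ast(R[N_{\hat x}])$ and identify $K_\ast(R[N_{\hat x}])$ with $K_\ast(BN_{\hat x}; R)$ by unwinding the definition of the functor $K_\ast$, which enters only through the homology of the classifying space. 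For each $\hat x \in \langle G\rangle''$, so that $n(\hat x) = \infty$, I substitute $PHC_\ast(R[G])_{\hat x} = T_\ast(\hat x; R)$.

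Collecting the three families of summands, namely the single identity term $\tilde K_\ast(R[G])$, the finite-order terms $K_\ast(BN_{\hat x}; R)$ indexed by $\langle G\rangle' \setminus \{e_G\}$, and the infinite-order terms $T_\ast(\hat x; R)$ indexed by $\langle G\rangle''$, reproduces exactly the direct-sum decomposition asserted in the proposition.

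The step I expect to require the most care is the bookkeeping around the reduction and the order conventions rather than any genuine computation. One must check that passing from the unreduced to the reduced theory affects only the $e_G$-summand: the split injection $PHC_\ast(R) = PHC_\ast(R[e_G]) \to PHC_\ast(R[G])$ factors through the identity-class contribution, so taking its cokernel replaces $K_\ast(R[G])$ by $\tilde K_\ast(R[G])$ and leaves every other contribution untouched. One must also track the convention $n(e_G) = 0$ so that the identity class is correctly removed from the finite-order sum and not double-counted. Granting the decomposition recalled from \cite{B}, these verifications are routine and the proposition follows.
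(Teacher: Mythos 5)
Your proposal is correct and matches the paper's (largely implicit) argument: the paper derives Proposition \ref{P3} by exactly this assembly, combining the conjugacy-class decomposition $\mathcal H_\ast(R[G])=\oplus_{\hat x}\mathcal H_\ast(R[G])_{\hat x}$ with the recalled per-class values $PHC_\ast(R[G])_{\hat x}=K_\ast(R[N_{\hat x}])$ for $0\ne n(\hat x)<\infty$, $T_\ast(\hat x;R)$ for $n(\hat x)=\infty$, and $\tilde K_\ast(R[G])$ for the reduced $e_G$-contribution. Your extra care about $n(e_G)=0$ placing the identity class in $\langle G\rangle'$ (hence the explicit removal) and about the reduction affecting only the $e_G$-summand is exactly the bookkeeping the paper leaves to the reader.
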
\label {P3}
An equivalent form of this proposition  is stated  in \cite {B} for a field of characteristic zero as Theorem 1'. 
\medskip
\vskip .2in
Let $H$ and $G$ be two groups and $P= H\ast G$ their free product. Recall that $B(H\ast G)= BH \vee BG$  the base point union of the spaces $BH$ and $BG.$   

As an immediate consequence of  Observation (\ref{O2}) (description of conjugacy classes of elements of $P$ and of  their centralizers) in the previous section one has 

\begin {proposition}   \label {P4}\

 \noindent $$\tilde HC_\ast(R[H\ast G]) = \tilde HC_\ast(R[H]) \bigoplus \tilde HC_\ast (R[G])\bigoplus (\oplus_{\hat x\in U}H_\ast ( B\mathbb Z_{k(\hat x)}; R))  $$
$$\tilde {PHC}_\ast(R[H\ast G]) = \tilde {PHC}_\ast(R[H]) \bigoplus \tilde {PHC}_\ast (R[G])\  \bigoplus (\oplus_{\hat x\in U} T_\ast ( \hat x;  R))  $$

where 
 $$H_\ast(B(\mathbb Z_k);R)= 
\begin{cases} 
R \ \rm {for} \ \ast=0\\
H_1( B(\mathbb Z_k); R) \ \rm {for} \  \ast \rm {odd} \\   
H_2(B (\mathbb Z_k); R) \ \rm{for} \ast \  \rm{even}\ne 0 
\end{cases}$$

 and 
 $$ T_\ast (\hat x; R)= \begin{cases} H_1(B(\mathbb Z_{k(x)}); R) \ \rm {for} \  \ast \rm {odd} \\   
H_2(B (\mathbb Z_{k(x)}); R) \ \rm{for} \ast \  \rm{even} \end{cases}$$
 \end{proposition}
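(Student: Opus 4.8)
The plan is to derive Proposition~\ref{P4} by combining the conjugacy-class decomposition of $\mathcal H_\ast(R[P])$ recalled before Proposition~\ref{P3} with the description of $\langle P\rangle$ and of the centralizers furnished by Observation~\ref{O2}. First I would write, for each of $\mathcal H=\tilde{HC}$ and $\mathcal H=\tilde{PHC}$,
\[
\mathcal H_\ast(R[P])=\bigoplus_{\hat x\in\langle P\rangle}\mathcal H_\ast(R[P])_{\hat x},
\]
and split the index set using $\langle P\rangle=e_P\sqcup(\langle H\rangle\setminus e_H)\sqcup(\langle G\rangle\setminus e_G)\sqcup U$. The decisive input from Observation~\ref{O2} is that for $\hat x\in\langle H\rangle\setminus e_H$ the centralizer $P_x$ equals the centralizer $H_x$, so that $N_{\hat x}$, $n(\hat x)$ and $k(\hat x)$ are exactly the invariants computed inside $H$; hence the contribution $\mathcal H_\ast(R[P])_{\hat x}$ coincides term by term with $\mathcal H_\ast(R[H])_{\hat x}$, and symmetrically for $\langle G\rangle\setminus e_G$ and $G$.

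Next I would reassemble the basepoint contribution together with these factor contributions. Here the key geometric fact is $B(H\ast G)=BH\vee BG$, giving the additivity $\tilde H_\ast(BP;R)=\tilde H_\ast(BH;R)\oplus\tilde H_\ast(BG;R)$. For cyclic homology the reduced $e_P$ term is $\tilde{HC}_\ast(R[P])_{e_P}=H_\ast(BP\times BS^1/\ast\times BS^1;R)$; applying the relative K\"unneth formula to the pair $(BP\times BS^1,\ast\times BS^1)$ rewrites this as $\tilde H_\ast(BP;R)\otimes H_\ast(BS^1;R)$ up to Tor terms, so the wedge splitting above makes it the direct sum of the reduced $e_H$ and $e_G$ terms. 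For periodic cyclic homology the $e_P$ term is $\tilde K_\ast(R[P])$, which is by definition built from $\tilde H_\ast(BP;R)$ and therefore splits the same way. Combining these splittings with the term-by-term identifications of the previous paragraph shows that the $e_P$, $\langle H\rangle\setminus e_H$ and $\langle G\rangle\setminus e_G$ summands together reconstitute $\tilde{HC}_\ast(R[H])\oplus\tilde{HC}_\ast(R[G])$, respectively $\tilde{PHC}_\ast(R[H])\oplus\tilde{PHC}_\ast(R[G])$.

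It then remains to evaluate the $U$-summand. For $\hat x\in U$, Observation~\ref{O2} identifies the pair $(P_x,\{x\})$ with $(\mathbb Z,k(x)\mathbb Z)$, hence $N_x\simeq\mathbb Z_{k(x)}$; moreover $x$ corresponds to a nonzero element of $\mathbb Z$, so $n(x)=\infty$. Feeding this into the $n(\hat x)=\infty$ formulas recalled before Proposition~\ref{P3} yields $\tilde{HC}_\ast(R[P])_{\hat x}=H_\ast(BN_x;R)=H_\ast(B\mathbb Z_{k(x)};R)$ and $\tilde{PHC}_\ast(R[P])_{\hat x}=T_\ast(\hat x;R)$, producing the two $\bigoplus_{\hat x\in U}$ summands in the statement.

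Finally I would make the two homology groups explicit. For $H_\ast(B\mathbb Z_k;R)$ I would use that $B\mathbb Z_k=K(\mathbb Z_k,1)$ has integral homology that is $2$-periodic in positive degrees ($\mathbb Z_k$ in odd degrees, $0$ in positive even degrees), so universal coefficients give the stated pattern: $R$ in degree $0$, a common value $H_1(B\mathbb Z_k;R)$ in every odd degree, and a common value $H_2(B\mathbb Z_k;R)$ in every positive even degree. For $T_\ast(\hat x;R)$ I would analyse the Gysin inverse system of the circle fibration $S^1=B\{x\}\to BP_x\to BN_x=B\mathbb Z_{k(x)}$ and show that, beyond the bottom of the range, the transition maps $S$ on the $2$-periodic homology of $B\mathbb Z_{k(x)}$ are isomorphisms, so the inverse limit stabilizes to $H_1(B\mathbb Z_{k(x)};R)$ in odd degrees and $H_2(B\mathbb Z_{k(x)};R)$ in even degrees. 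I expect this last inverse-limit computation---pinning down the Gysin map $S$ on $H_\ast(B\mathbb Z_{k(x)};R)$ and verifying stabilization to the advertised values---to be the only genuinely technical step, everything preceding it being formal bookkeeping driven by Observation~\ref{O2} and the decomposition $B(H\ast G)=BH\vee BG$.
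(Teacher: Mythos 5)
Your proposal is correct and follows exactly the route the paper intends: the paper presents Proposition \ref{P4} as an ``immediate consequence'' of Observation \ref{O2} combined with the conjugacy-class decomposition and the contribution formulas recalled before Proposition \ref{P3}, and your argument simply fills in those same steps (splitting $\langle P\rangle$, identifying the $H$- and $G$-contributions via the unchanged centralizers and $B(H\ast G)=BH\vee BG$, and evaluating the $U$-summands using $N_x\simeq\mathbb Z_{k(x)}$ with $n(x)=\infty$). The stabilization of the Gysin inverse system you flag as the technical step is likewise what the paper asserts in its footnote on $T_\ast(\hat x;R)$.
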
 
Note that if $R$ is an algebra over a field of characteristic zero then  $H_\ast(B(\mathbb Z_k); R)$ is concentrated in degree zero and isomorphic to $R$ and  for any $\hat x\in U,$ $T_\ast (\hat x; R)$ vanishes. 

To correct all inaccuracies in \cite {B} we insert the following errata to \cite{B}

\section {Errata to the paper  The cyclic homology of the group rings, Comment. Math. Helv., 60,
                         1985, no. 3, 354-365 }
 In the paper {\bf The cyclic homology of the group rings } published in Comment. Math. Helv., 60 (1985)  no 3, 354-365
 Propositions II and II{p}, straightforward consequences of the main result,Theorem I, are not true as stated. The statements become  correct  provided the cyclic resp. periodic cyclic homology are replaced by their reduced versions  and the ring $R$  is a  $\mathbb Q-$algebra, for instance a field of characteristic zero. The reduced cyclic resp. periodic cyclic homology of $ R[G]$ is quotient of the obvious split injective maps,  $i_G: HC_\ast( R)\to HC_\ast (R[G])$ resp.  $i_G: PHC_\ast(R)\to PHC(R[G])$
induced by the inclusion $e_G \in G.$
 Also on   line 1 page 363 to make the statement correct one shall replace     
  "$P_x\ne \{x\}$" by "$N_x\ne \mathbb Z_{k(x)}$ with   $\mathbb Z_k$ denoting  the finite cyclic group of order $k,$ and $k(x)$    the largest integer $k$ s.t. $x= y^k.$ 
\vskip .2in 

 I thank {\bf Markus Land }for bringing this to my attention and suggesting  the use of reduced cyclic and periodic cyclic homology for group rings. 

For an arbitrary commutative  ring  with unit $R,$  Propositions II and IIp should be corrected as follows
\begin{enumerate}
\item In Proposition II, $HC_\ast$ has to be replaced by the reduced version $\tilde HC_\ast$ and  the sentence 
  {\it " $\ R_{\hat \alpha} = R$ regarded as a graded module concentrated in the degree zero"}   
by  {\it "$R_{\hat \alpha}= H_\ast ( B(\mathbb Z_{k(x)}); R),  x\in \hat \alpha .$" }
\item In Proposition IIp,  $PHC_\ast$ has to be replaced by its  reduced version, $\tilde {PHC}_\ast,$ and to the right side of the equality one should add 
$\oplus _{\hat x\in U} T_\ast (\hat x; R),$ with 
$U= \{ \hat x\in \langle H\ast G \rangle \mid \hat x\cap (e_H\ast G )= \emptyset, \hat x \cap (H\ast e_G)= \emptyset\}$ where $\langle \Gamma \rangle$ denotes the set of conjugacy classes of elements of the group $\Gamma$ and $e_\Gamma$ the neuter element of $\Gamma.$
\end{enumerate}
Note that 
$$H_\ast(B(\mathbb Z_k);R)= 
\begin{cases} 
R \ \rm {for} \ \ast=0\\
H_1( B(\mathbb Z_k); R) \ \rm {for} \  \ast \rm {odd} \\   
H_2(B (\mathbb Z_k); R) \ \rm{for} \ast \  \rm{even}\ne 0 
\end{cases}$$
and for   $\hat x\in U,$  and $x\in \hat x$  the pair of group - subgroup  $(G_x, \{x\})$ is isomorphic to the pair $ (\mathbb Z, k(x)\mathbb Z),$ hence $N_x= \mathbb Z_{k(x)},$ and 
$$ T_\ast (\hat x; R)= \begin{cases} H_1(B(\mathbb Z_{k(x)}); R) \ \rm {for} \  \ast \rm {odd} \\   
H_2(B (\mathbb Z_{k(x)}); R) \ \rm{for} \ast \  \rm{even} \end{cases}
\footnote {Recall  that  $T_\ast (\hat x; R)\simeq T_\ast ( x; R):= \varprojlim _n  (\xymatrix{\cdots \ar[r]& H_{\ast +2n} (B (\mathbb Z_ {k(x)}) ; R) \ar[r]^\Sigma & H_{\ast +2n-2}( B(\mathbb Z_ {k(x)}); R) \ar[r]  &\cdots })$ 
 { where $\Sigma$ is the isomorphism in the homology Gysin sequence of the fibration   $S^1= B(k \mathbb Z) \to B(Z) \to B(Z_k)$ } which vanishes when $R$ is a $\mathbb Q-$ algebra.}.$$

\end{document}